\patchcmd{\subsection}{-.5em}{.5em}{}{}
\newtheorem{theorem}{Theorem}[section]
\theoremstyle{plain}
\newtheorem{corollary}[theorem]{Corollary}
\newtheorem{lemma}[theorem]{Lemma}
\newtheorem{remark}[theorem]{Remark}
\numberwithin{equation}{section}
\theoremstyle{plain}
\begin{document}

\title[]{Spinorial Yamabe-type equations and the B\"ar-Hijazi-Lott invariant }

\author{ Jurgen Julio-Batalla}
\address{ Universidad Industrial de Santander, Carrera 27 calle 9, 680002, Bucaramanga, Santander, Colombia}
\email{ jajuliob@uis.edu.co}
\thanks{  The author was supported by project 3756 of Vicerrector\'ia de Investigaci\'on y Extensi\'on of Universidad Industrial de Santander}

\begin{abstract} We consider on a closed Riemannian spin manifold $(M^n,g,\sigma)$ the spinorial Yamabe type equation $D_g\varphi=\lambda|\varphi|^{\frac{2}{n-1}}\varphi$, where $\varphi$ is a spinor field and $\lambda$ is a positive constant. For a normalized solution $\varphi$ of this equation we find a positive lower bound for $\lambda^2$. As an application we obtain an explicit lower bound of the B\"ar-Hijazi-Lott invariant for some spin manifolds with positive scalar curvature. 
\end{abstract}

\maketitle

\section{Introduction}
Let $(M^n,g,\sigma)$ be a closed  Riemannian spin  manifold with a spin structure $\sigma:P_{spin}(M)\rightarrow P_{SO}(M)$. We consider the representation $\rho:spin(n)\rightarrow Aut(S_n)$ of the spin group compatible with Clifford multiplication and the complex vector bundle $SM=P_{spin}(M)\times_{\rho}S_n$ associated to the  principal bundle $\sigma:P_{spin}(M)\rightarrow P_{SO}(M)$.   This vector bundle is known as the spinor bundle and it is endowed with a natural spinorial Levi-Civita connection $\nabla$, a pointwise Hermitian scalar product $\langle\cdot,\cdot\rangle$ and an elliptic differential operator $D_g$ (called Dirac operator).
 
In this paper, we are interested in the spinorial Yamabe-type equation 
\begin{equation}\label{Dirac}
D_g\varphi=\lambda|\varphi|^{\frac{2}{n-1}}\varphi,
\end{equation} 
where $\varphi\in\Gamma(SM)$, $\lambda\in\mathbb{R}$ and the scalar curvature of the metric $g$, $sc_g$, is positive.

When $\lambda=1$ this equation is referred as the spinorial Yamabe equation (\cite{AmH};\cite{AHM};\cite{AmAubin}). Solutions of this equation are related to some interesting geometric problems. For instance when $n=2$ solutions of the spinorial Yamabe equation produce conformally immersed constant mean curvature surfaces in $\mathbb{R}^3$ (\cite{Friedrich,AmH}), and when $n\geq 2$ it is related to conformally immersed constant mean curvature hypersurfaces in $\mathbb{R}^{n+1}$. Also there is an interesting relation with a conformal spinorial invariant. Indeed, in his Habilitation \cite{AmH} B. Ammann  introduced the B\"ar-Hijazi-Lott invariant $\lambda_{min}^+(M^n,[g],\sigma)$ of the spin manifold $(M,g,\sigma)$ defined by

$$\lambda_{min}^+(M^n,[g],\sigma):=\inf\limits_{g'\in[g]}\lambda_1(D_{g'})vol(M,g')^{1/n}, $$
where $\lambda_1(D_{ g'})$ is the first positive eigenvalue of Dirac operator $D_{g'}$ with respect to the conformal metric $g'$ and $vol(M,g')$ is the volume of $M$ with respect to the metric $g'$.

J. Lott proved in \cite{Lott} that this invariant is positive when the Dirac operator $D_{g'}$ (for some conformal metric $g'$) is invertible. Later B. Ammann in (\cite{Amlow}) extended the result
to any closed spin Riemannian manifold.

In order to obtain generalized metrics which realize the B\"ar-Hijazi-Lott invariant, B. Ammann proved (\cite{AmH}) that this problem  is equivalent to the existence of a spinor field $\varphi$  minimizing the Rayleigh type quotient
\begin{equation}\label{Functional}
J(\psi):=\dfrac{\left(\int_{M}|D_g\psi|^{\frac{2n}{n+1}}dv_g\right)^{(n+1)/n}}{|\int_{M} \langle D_g\psi,\psi\rangle dv_g|}
\end{equation}
on the space of smooth spinor fields. Here $dv_g$ is the volume element of $(M^n,g)$.

Moreover, the infimum of $J$ is $\lambda_{min}^+(M^n,[g],\sigma)$ and the corresponding Euler-Lagrange equation is 
$$D_g\varphi=|\varphi|^{\frac{2}{n-1}}\varphi\quad\text{with}\quad\int_M|\varphi|^{\frac{2n}{n-1}}dv_g= \lambda_{min}^+(M^n,[g],\sigma)^n$$
or equivalently
\begin{equation}\label{ELequation}
D_g\varphi=\lambda_{min}^+(M^n,[g],\sigma)|\varphi|^{\frac{2}{n-1}} \varphi\hspace{1cm}\text{with}\hspace{1cm}\int_{M}|\varphi|^{\frac{2n}{n-1}}dv_g=1.
\end{equation}

From this  discussion, let us point out here that solutions of equation \eqref{Dirac} normalized by \\$\int_M|\varphi|^{\frac{2n}{n-1}}dv_g=1$ can be related to the B\"ar-Hijazi-Lott invariant. With this in mind, we state the main result of this note as follows

\begin{theorem}\label{mainTeo}
Let  $(M^n,g,\sigma)$ be a closed Riemannian spin manifold of positive scalar curvature. Assume $n\geq 3$ and there is an open bounded domain $\Omega\subset M$ such that $(\Omega,g)$ is conformally flat. 
If $\varphi$ is a solution of the equation \eqref{Dirac} with $\int_{M}|\varphi|^{\frac{2n}{n-1}}dv_g=1$. Then $$\lambda^2\geq \dfrac{n^2}{4}\dfrac{(n-2)}{(n-1)}\omega_n^{2/n},$$
where $\omega_n$ is the volume of the round sphere $(\mathbb{S}^n,g_0)$.



\end{theorem}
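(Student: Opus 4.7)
The plan is to derive a refined Hijazi-type integral inequality for the non-linear Dirac equation, then combine it with a Hölder interpolation and the sharp Euclidean Sobolev inequality on $(M,g)$, the latter being made accessible by the conformal flatness of $\Omega$.

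First, I would use the twistor decomposition $\nabla_X\varphi = -\tfrac{1}{n}X\cdot D_g\varphi + T_X\varphi$ and the fact that $\langle X\cdot\varphi,\varphi\rangle\in i\mathbb{R}$ (since Clifford multiplication by a real vector is skew-Hermitian). The particular nonlinearity in \eqref{Dirac} makes $\operatorname{Re}\langle X\cdot D_g\varphi,\varphi\rangle = \lambda|\varphi|^{2/(n-1)}\operatorname{Re}\langle X\cdot\varphi,\varphi\rangle = 0$; this kills the Clifford-projection contribution to $\operatorname{Re}\langle\nabla_X\varphi,\varphi\rangle$ and upgrades Kato's inequality to $|\nabla|\varphi||^2 \leq |T\varphi|^2$ pointwise. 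Combined with the identity $|\nabla\varphi|^2 = \tfrac{1}{n}|D_g\varphi|^2 + |T\varphi|^2$ and the integrated Schrödinger--Lichnerowicz formula, this yields the refined Hijazi-type bound
$$\int_M|D_g\varphi|^2\,dv_g \;\geq\; \frac{n}{n-1}\int_M|\nabla|\varphi||^2\,dv_g + \frac{n}{4(n-1)}\int_M sc_g\,|\varphi|^2\,dv_g.$$
Since $|D_g\varphi|^2 = \lambda^2|\varphi|^{2(n+1)/(n-1)}$ by the equation and $sc_g>0$, writing $f=|\varphi|$ gives
$$\lambda^2\int_M f^{2(n+1)/(n-1)}\,dv_g \;\geq\; \frac{n}{n-1}\int_M|\nabla f|^2\,dv_g.$$

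Next, since $2(n+1)/(n-1)$ lies between $2n/(n-1)$ and $2n/(n-2)$, Hölder interpolation with weight $\theta = (n-1)/(n+1)$ together with the normalization $\int_M f^{2n/(n-1)}\,dv_g = 1$ gives
$$\int_M f^{2(n+1)/(n-1)}\,dv_g \;\leq\; \left(\int_M f^{2n/(n-2)}\,dv_g\right)^{(n-2)/n}.$$
The sharp Euclidean Sobolev inequality $\left(\int f^{2n/(n-2)}\right)^{(n-2)/n} \leq K(n,2)^2\int|\nabla f|^2$ with $K(n,2)^2 = 4/(n(n-2)\omega_n^{2/n})$, once available on $(M,g)$, chains these bounds into
$$\lambda^2 K(n,2)^2 \int_M|\nabla f|^2\,dv_g \;\geq\; \frac{n}{n-1}\int_M|\nabla f|^2\,dv_g,$$
and dividing by $\int|\nabla f|^2>0$ (nonvanishing, since otherwise $|\varphi|$ would be constant, making $\varphi$ a Killing-type eigenspinor, a degenerate situation handled directly) yields $\lambda^2 \geq n/((n-1)K(n,2)^2) = \tfrac{n^2(n-2)}{4(n-1)}\omega_n^{2/n}$.

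The main obstacle is justifying the sharp Sobolev inequality with the Euclidean constant $K(n,2)^2$ and no lower-order correction on $(M,g)$: on a general compact manifold the Hebey--Vaugon theorem only provides $\|f\|_{L^{2n/(n-2)}}^2 \leq (K(n,2)^2+\varepsilon)\|\nabla f\|_{L^2}^2 + B_\varepsilon\|f\|_{L^2}^2$, and eliminating the lower-order $B_\varepsilon\|f\|_{L^2}^2$ term is nontrivial. The conformal flatness of $\Omega$ is precisely what should make this possible, presumably via a test-function or developing-map argument transporting the sharp Euclidean estimate to the global situation. The scalar-curvature term $\tfrac{n}{4(n-1)}\int sc_g f^2$ that I dropped, in combination with $sc_g > 0$, would likely be used to absorb any residual Hebey--Vaugon error.
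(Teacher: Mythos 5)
Your first two steps are sound: the refined Kato inequality $|\nabla|\varphi||^2\le|P\varphi|^2$ (using that $\langle X\cdot\varphi,\varphi\rangle$ is purely imaginary and that the nonlinearity is a real multiple of $\varphi$), combined with the Schr\"odinger--Lichnerowicz formula and the splitting $|\nabla\varphi|^2=\frac1n|D_g\varphi|^2+|P\varphi|^2$, does give $\int_M|D_g\varphi|^2\,dv_g\ge\frac{n}{n-1}\int_M|\nabla|\varphi||^2\,dv_g+\frac{n}{4(n-1)}\int_M sc_g|\varphi|^2\,dv_g$, and your H\"older interpolation with the normalization is correct. But the step you yourself flag as the ``main obstacle'' is a genuine, fatal gap, not a technicality. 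The inequality $\bigl(\int_M f^{2n/(n-2)}dv_g\bigr)^{(n-2)/n}\le K(n,2)^2\int_M|\nabla f|^2dv_g$ with the Euclidean constant and no lower-order term is false on every closed manifold (take $f\equiv 1$), and the conformal flatness of the subdomain $\Omega$ cannot rescue it for your $f=|\varphi|$: the sharp Schoen--Yau inequality (Lemma 4.1 of the paper) requires $\mathrm{supp}(f)\subset\Omega$, whereas by the weak unique continuation property the spinor cannot vanish on any open set, so $|\varphi|$ is irreducibly global. Keeping the scalar-curvature term instead of dropping it does not repair this either: globally the best available substitute is the Yamabe quotient of $(M,[g])$, which turns your chain into exactly Hijazi's inequality $\lambda^2\ge\frac{n}{4(n-1)}Y(M,[g])$, and by Aubin $Y(M,[g])\le n(n-1)\omega_n^{2/n}$ with equality only for the round conformal sphere, so the explicit sphere constant cannot be extracted this way; likewise a Hebey--Vaugon remainder $B\|f\|_{L^2}^2$ sits on the wrong side of the inequality to be absorbed by $\frac{n}{4(n-1)}\int_M sc_g f^2dv_g$ without introducing an uncontrolled constant depending on $B$ and $\min sc_g$ in place of the claimed bound.

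The paper avoids this precisely by localizing the \emph{test function} rather than the spinor. It pairs the Bochner and Schr\"odinger--Lichnerowicz formulas against an arbitrary $u\in C_0^2(\Omega)$ to obtain the integral identity of Lemma 3.1, converts it into the eigenvalue comparison $\lambda^2\ge\lambda_1\bigl(\frac{4(n-1)}{n}|\varphi|^{4/(n-1)},\Omega\bigr)$ for the weighted Dirichlet problem $-2\Delta u+sc_g u=\lambda^2 f u$ on $\Omega$ (Theorem 3.3), and only then uses H\"older together with the Schoen--Yau inequality applied to functions compactly supported in the conformally flat domain, where the sharp sphere constant is legitimately available. Any repair of your argument would have to perform an analogous localization; the global Sobolev shortcut cannot work. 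Note also that the paper's own computation yields $\lambda^2\ge\frac{n^2(n-2)}{8(n-1)}\omega_n^{2/n}$ (the constant actually used in Corollary 1.2), so the constant $\frac{n^2(n-2)}{4(n-1)}\omega_n^{2/n}$ you aim at is stronger than what the paper's proof delivers; your derivation of it rests entirely on the invalid Sobolev step.
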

The general existence of solutions for equation \eqref{Dirac} is a very difficult task. For instance, a standard variational approach can not produce nontrivial solutions because $2n/(n+1)$ is the critical exponent in the Sobolev inequality of the spaces involved in the definition of the functional \eqref{Functional}.

However  there exists a criterion (similar to the Yamabe problem) to provide solutions to this problem. In fact if the B\"ar-Hijazi-Lott invariant satisfies 
\begin{equation}\label{stricinequality}
\lambda_{min}^+(M^n,[g],\sigma)<\frac{n}{2}\omega_n^{1/n}.
\end{equation}
Then there is a nontrivial spinor solution of \eqref{ELequation} and therefore there is a generalized metric that attain the invariant $\lambda_{min}^+(M^n,[g],\sigma)$ (see \cite{AmH} for more details).

It is well-known that $\lambda_{min}^+(M^n,[g],\sigma)\leq\frac{n}{2}\omega_n^{1/n}$  for all closed spin manifolds $(M^n,g,\sigma)$ (see \cite{AmAubin}). Furthermore the equality is attained by the round sphere of dimension $n$ i.e., $\lambda_{min}^+(\mathbb{S}^n,[g_0],\sigma_0)=(n/2)\omega_n^{1/n}$, where $\sigma_0$ is the unique spin structure on $\mathbb{S}^n$. Nevertheless it is an open problem to determine when a closed spin manifold non conformally equivalent to the round sphere satisfies or not the strict inequality \eqref{stricinequality}. 

There are very few cases where \eqref{stricinequality} it is known. For example, in the locally conformally flat setting B. Ammann, E. Humbert and B. Morel give a sufficient condition to get \eqref{stricinequality}. Specifically, if $(M^n,g,\sigma)$ is locally conformally flat with invertible Dirac operator $D_g$ and the mass-endomorphism possesses a positive eigenvalue at a point, then the inequality \eqref{stricinequality} holds (see \cite{AHM} for more details about this result). In a recent paper \cite{IsobeSphere} T. Isobe and T. Xu constructed a family of metrics $g(\epsilon)$ on sphere $\mathbb{S}^n$ such that $$\lambda_{min}^+(\mathbb{S}^n,[g(\epsilon)],\sigma_0)<\frac{n}{2}\omega_n^{1/n}$$
for $n\geq 4$ and $\epsilon>0$ small enough.
The metrics $g(\epsilon)$ restricted to some open domain $\Omega\subset\mathbb{S}^n$  are equal to the round metric $g_0$  but are not locally conformally flat. For more about the problem of strict inequality \eqref{stricinequality} see for instance \cite{Sire}.

The manifolds described above  not only admit a nontrivial solution of \eqref{ELequation}, also contain an open domain $\Omega$ for which $(\Omega,g)$ is conformally flat. Then the Theorem \eqref{mainTeo} implies that

\begin{corollary}
Let $(M^n,g,\sigma)$ be  a closed spin manifold of dimension $n\geq 3$ with positive scalar curvature. Assume that
\begin{itemize}
\item $(M^n,g)$ is locally conformally flat with a positive eigenvalue for the mass endomorphism at some point    or,
\item $(M^n,g,\sigma)=(\mathbb{S}^n,g(\epsilon),\sigma_0)$ for small $\epsilon>0$ and $g(\epsilon)$ as above.
\end{itemize} 
Then the B\"ar-Hijazi-Lott invariant satisfies that
$$\lambda_{min}^+(M^n,[g],\sigma)^2\in\left(\dfrac{(n-2)}{8(n-1)}n^2\omega_n^{2/n}, \frac{1}{4}n^2\omega_n^{2/n}\right).$$
\end{corollary}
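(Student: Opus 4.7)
The plan is to derive both bounds by combining Theorem \ref{mainTeo} with the existence machinery already recalled in the introduction; the corollary is essentially a bookkeeping statement assembling three ingredients. I would begin with the strict upper bound, which is hypothesis-driven. In the first case (locally conformally flat with a positive eigenvalue of the mass endomorphism), the Ammann--Humbert--Morel result in \cite{AHM} gives the strict inequality $\lambda_{min}^+(M^n,[g],\sigma)<\tfrac{n}{2}\omega_n^{1/n}$ provided $D_g$ is invertible; but the assumption $sc_g>0$ together with the Lichnerowicz formula $D_g^2=\nabla^*\nabla+\tfrac{1}{4}sc_g$ forces $\ker D_g=0$, so invertibility is automatic. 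In the second case, Isobe and Xu \cite{IsobeSphere} construct $g(\epsilon)$ precisely so that the same strict inequality holds for small $\epsilon>0$. Squaring yields $\lambda_{min}^{+}(M^n,[g],\sigma)^2<\tfrac{1}{4}n^2\omega_n^{2/n}$.

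For the strict lower bound I would invoke Ammann's criterion \cite{AmH}: the strict inequality \eqref{stricinequality} just established produces a nontrivial smooth spinor $\varphi$ solving \eqref{ELequation}, i.e., equation \eqref{Dirac} with $\lambda=\lambda_{min}^+(M^n,[g],\sigma)$ and $\int_M|\varphi|^{2n/(n-1)}\,dv_g=1$. Then I need only check the hypotheses of Theorem \ref{mainTeo}: closedness, spin, $n\geq 3$ and $sc_g>0$ are given, so what remains is producing an open bounded $\Omega\subset M$ with $(\Omega,g)$ conformally flat. In the first case any small metric ball does the job, since the whole of $M$ is locally conformally flat. In the second case the Isobe--Xu construction is engineered so that $g(\epsilon)$ coincides with the round metric $g_0$ on a nonempty open subset of $\mathbb{S}^n$; the round metric is conformally flat, so any bounded open subset of that region serves as $\Omega$. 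Theorem \ref{mainTeo} then yields
\[
\lambda_{min}^+(M^n,[g],\sigma)^2\geq \dfrac{n^2(n-2)}{4(n-1)}\omega_n^{2/n},
\]
and since $\tfrac{n^2(n-2)}{4(n-1)}=2\cdot\tfrac{n^2(n-2)}{8(n-1)}$, the strict lower bound $\lambda_{min}^{+2}>\tfrac{n^2(n-2)}{8(n-1)}\omega_n^{2/n}$ follows immediately for $n\geq 3$.

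The substantive work has been absorbed into Theorem \ref{mainTeo} and into the quoted existence results; the corollary's role is simply to exhibit classes of manifolds satisfying all required hypotheses simultaneously. The only genuinely delicate point I expect is the second bullet, where the conformally flat open region has to be read off from the explicit form of $g(\epsilon)$ in \cite{IsobeSphere} rather than assumed abstractly, and where one should also verify that $sc_{g(\epsilon)}>0$ so that the positive scalar curvature hypothesis of Theorem \ref{mainTeo} is consistent with the Isobe--Xu setup. The first bullet is trivial in both respects because $M$ is globally locally conformally flat and $sc_g>0$ is imposed directly.
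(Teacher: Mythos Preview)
Your proposal is correct and follows essentially the same route as the paper, which presents the corollary as an immediate consequence of Theorem~\ref{mainTeo} together with the existence results of \cite{AHM} and \cite{IsobeSphere} ensuring both the strict inequality \eqref{stricinequality} and a conformally flat open subset. You are in fact more careful than the paper on two points: you supply the Lichnerowicz argument for the invertibility of $D_g$ needed in \cite{AHM}, and you explain why the lower endpoint of the interval is strict by comparing the constant in Theorem~\ref{mainTeo} with the one in the corollary.
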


There is another invariant for which our estimation applies. Let $C(M)$ be the set of conformal classes of metrics on closed spin manifold $M^n$. The $\tau-$invariant of $M^n$ with spin structure $\sigma$ is defined by $$\tau(M^n,\sigma):=\sup\limits_{[g]\in C(M)}\lambda^+_{min}(M^n,[g],\sigma).$$

Clearly the $\tau-$invariant for manifolds in Corollary 1.2 satisfies
$$\left(\dfrac{\tau(M^n,\sigma)}{n\omega_n^{1/n}} \right)^2\in\left(\dfrac{(n-2)}{8(n-1)}, \frac{1}{4}\right].$$ 

In the literature there are other type of lower bounds for the B\"ar-Hijazi-Lott invariant. In dimension $n\geq 3$, O. Hijazi in \cite{Hijazi}  proved that $$\lambda_{min}^+(M^n,[g],\sigma)^2\geq \frac{n}{4(n-1)}Y(M^n,[g]),$$
where $Y(M^n,[g])$ is the Yamabe constant of the conformal class $[g]$ on $M$.

This inequality exhibits a strong connection with the Yamabe problem. For instance the Hijazi's inequality provides a spinorial proof of the Yamabe problem when the inequality \eqref{stricinequality} holds. 

Therefore, estimations of the invariant $\lambda_{min}^+(M^n,[g],\sigma)$ can help to estimate the Yamabe constant from a spinorial point of view.

The plan of this note is the following. In section 2 we will recall some background material on spin manifolds and the equation \eqref{Dirac}. In section 3 we shall show all ingredients needed later, and in section 4 we deal with the proof of the main theorem.
 
\section{Preliminaries}
In this section we recall some basic facts about spin manifolds and Dirac operator. Also some results for solutions to Dirac-type equations are recalled. For more details see (\cite{AmH},\cite{HijaziBook}).

On a closed oriented Riemannian manifold $(M^n,g)$ we can define a $SO(n)$-principal bundle $P_{SO}M$ over $M$ of oriented $g-$orthonormal bases at $x\in M$. Let $f_{\alpha,\beta}$ be the transition functions. For $n\geq 3$ there exists the universal covering $\sigma:spin(n)\rightarrow SO(n)$ where $spin(n)$ is the group generated by even unit-length vector of $\mathbb{R}^n$ in the real Clifford algebra $Cl_n$ (the associative $\mathbb{R}-$algebra generated by relation $VW+WV=-2( V,W)$ for Euclidean metric $(,)$). The manifold $M$ is called spin if there is a $spin(n)-$principal bundle $P_{spin}M$ over $M$ such that it is a double covering of $P_{SO}M$ whose restriction to each fiber is $\sigma:spin(n)\rightarrow SO(n)$. This double covering from $P_{spin}M$ to $P_{SO}M$, $\sigma$, it is known as spin structure. 

There are  four special structures associated to a spin manifold $(M^n,g,\sigma)$:
\begin{enumerate}
\item A complex vector bundle $SM:=P_{spin}(M)\times_{\rho}S_n$ where $\rho:spin(n)\rightarrow Aut(S_n)$ is the restriction to $spin(n)$ of an irreducible representation $\rho:\mathbb{C}l_n\rightarrow End(S_n) $ of the complex Clifford algebra $\mathbb{C}l_n\simeq Cl_n\otimes_{\mathbb{R}} \mathbb{C}$,  $S_n\simeq\mathbb{C}^N$ and $N=2^{[n/2]}$.
\item The Clifford multiplication $m$ on $SM$ defined by
\begin{align*}
m:TM\times SM&\rightarrow SM\\
X\otimes\varphi&\mapsto X\cdot_g\varphi:=\rho(X)\varphi.
\end{align*} 
\item A Hermitian product $\langle\cdot,\cdot\rangle$ on sections of $SM$.

\item A Levi-Civita connection $\nabla$ on $SM$.
\end{enumerate}

All these structures are compatible in the following sense:
\begin{align*}
\langle X\cdot\varphi,\psi\rangle&=-\langle\varphi,X\cdot\psi\rangle, \\
X(\langle\varphi,\psi\rangle)&=\langle\nabla_X\varphi,\psi\rangle+\langle\varphi,\nabla_X\psi\rangle,\\
\nabla_X(Y\cdot\varphi)&=\nabla_XY\varphi+Y\cdot\nabla_X\varphi,
\end{align*}
for all $X,Y\in\Gamma(TM)$ and $\varphi,\psi\in\Gamma(SM)$.

On the other hand, on $\Gamma(SM)$ we have a connection Laplacian $\nabla^*\nabla$ where\\
$\nabla^*:\Gamma(Hom(TM,SM))\rightarrow\Gamma(SM)$ is the adjoint of the Levi-Civita connection $\nabla:\Gamma(SM)\rightarrow\Gamma(Hom(TM,SM))$ defined implicitly by $$\int_M\langle\nabla^*F,\varphi\rangle dv_g=\int_M\langle F,\nabla\varphi\rangle dv_g.$$
This Laplacian satisfies a very useful Bochner-type formula. For spinor field $\varphi$ compactly supported  it is known that \begin{equation}\label{Bochner}
\Delta \left(\frac{|\varphi|^2}{2}\right)=\langle\nabla\varphi,\nabla\varphi\rangle-\langle\nabla^*\nabla\varphi,\varphi\rangle,
\end{equation}
where $\Delta$ is the non-positive Laplacian-Beltrami on $(M^n,g)$.

Now identifying $\Gamma(Hom(TM,SM))$ as $\Gamma(TM\otimes SM)$ we can define the Dirac operator $D_g$ as the composition of $\nabla$ with the Clifford multiplication $m$ i.e. $D_g:=m\circ \nabla$. For a local orthonormal frame $\{E_i\}$ we have $$D_g\varphi=\sum\limits_{i=1}^nE_i\cdot_g\nabla_{E_i}\varphi.$$
For our purpose we just recall the following properties of Dirac operator 
\begin{enumerate}
\item (Green's formula) For an open domain with smooth boundary $\Omega\subset M$,
$$\int_{\Omega}\langle D_g\varphi,\psi\rangle dv_g=\int_{\Omega}\langle\varphi,D_g\psi\rangle dv_g+\int_{\partial\Omega}\langle\varphi,Z\cdot\psi\rangle dv_g^{n-1},$$
where $Z$ is a unit normal vector field on $\partial \Omega$ and $dv_g^{n-1}$ is the volume element  of $\partial\Omega$. 
\item (Schr\"odinger-Lichnerowicz formula) $$D_g^2=\nabla^*\nabla+\frac{1}{4}sc_gId_{\Gamma(SM)}.$$
\item (Penrose operator) The composition of the Levi-Civita connection $\nabla$ with the orthogonal projection on the kernel of the Clifford multiplication $m$ defines an operator $P$ known as Penrose operator. This operator satisfies that \begin{equation}\label{c}
|\nabla\varphi|^2=\frac{1}{n}|D_g\varphi|^2+|P\varphi|^2.
\end{equation}
\end{enumerate} 
We finish this section with a couple of facts about the solutions of equation \ref{Dirac}. These can be consult in Ammann's Habilitation \cite{AmH}.

\begin{remark}

\begin{itemize}
\item Due to the possible existence of nodal sets for spinor fields $\varphi$ solutions of  \ref{Dirac} by regularity theory we have that $\varphi\in C^{1,\alpha}(SM)\cap C^{\infty}(S(M  -\{\varphi^{-1}(0)\})).$
\item The equation \ref{Dirac} satisfies a weak unique continuation property. This means that the nodal set of a $C^1$ solution does not contain any nonempty open set \cite[Corollary 4.5.2]{AmH}.
\end{itemize}
\end{remark}

\section{Estimation of Parameters}

In this section we will prove some key results.

We consider the Dirac-type equation
\begin{equation}\label{Diracq}
D_g\varphi=\lambda|\varphi|^{q}\varphi
\end{equation}

where $\lambda, q\in \mathbb{R}_+$. 

\begin{lemma}
Let $\varphi$ be a solution of \eqref{Diracq} without zeros in an open domain $\Omega\subset M$. Assume $\varphi\in C^{\infty}(S\Omega)$. For any $u\in C_0^2(\Omega)$  it satisfies the following integral identity

\begin{equation}\label{eq}
0=\int_{\Omega} |\varphi|^2\left\lbrace\frac{-\Delta u}{2}+\frac{u}{4}sc_g+\left(\frac{u}{n}-u\right)\lambda^2|\varphi|^{2q}\right\rbrace dv_g + \int_{\Omega} u|P\varphi|^2dv_g.
\end{equation}

\end{lemma}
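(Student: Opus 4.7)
The plan is to combine three standard spinorial tools — the Bochner-type identity \eqref{Bochner}, the Schrödinger–Lichnerowicz formula, and the Penrose decomposition \eqref{c} — with integration by parts against the test weight $u$. Since $u\in C^2_0(\Omega)$, all boundary terms vanish, and the hypothesis that $\varphi$ is smooth and nonvanishing on $\Omega$ makes the pointwise manipulations legal.

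The concrete steps I would carry out are the following. First, multiply the Bochner identity \eqref{Bochner} by $u$ and integrate over $\Omega$; on the left, two integrations by parts convert $\int u\,\Delta(|\varphi|^2/2)$ into $\int (|\varphi|^2/2)\Delta u$. Next, replace $|\nabla\varphi|^2$ on the right by $\tfrac1n|D_g\varphi|^2+|P\varphi|^2$ via \eqref{c}, and use the equation \eqref{Diracq} to write $|D_g\varphi|^2=\lambda^2|\varphi|^{2q+2}$. Then handle the term $\int u\,\langle\nabla^{*}\nabla\varphi,\varphi\rangle$ using Schrödinger–Lichnerowicz, which turns it into $\int u\langle D_g^{2}\varphi,\varphi\rangle-\int \tfrac{u}{4}sc_g|\varphi|^2$. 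For the remaining $D_g^{2}$-term, apply Green's formula to move one $D_g$ onto $u\varphi$:
\begin{equation*}
\int_\Omega u\langle D_g^{2}\varphi,\varphi\rangle\,dv_g=\int_\Omega\langle D_g\varphi,D_g(u\varphi)\rangle\,dv_g=\int_\Omega\langle D_g\varphi,\nabla u\cdot\varphi\rangle\,dv_g+\int_\Omega u|D_g\varphi|^{2}\,dv_g.
\end{equation*}
Substituting $D_g\varphi=\lambda|\varphi|^{q}\varphi$ in the first term gives $\lambda\int |\varphi|^{q}\langle\varphi,\nabla u\cdot\varphi\rangle$, whose real part vanishes because Clifford multiplication by a real tangent vector is skew-Hermitian, so $\langle\varphi,X\cdot\varphi\rangle\in i\mathbb{R}$. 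Hence (taking real parts, as is tacit in \eqref{Bochner}) the cross term drops out, and collecting the remaining pieces yields
\begin{equation*}
\int_\Omega\tfrac{|\varphi|^2}{2}\Delta u=\tfrac{1}{n}\int_\Omega u\lambda^{2}|\varphi|^{2q+2}+\int_\Omega u|P\varphi|^{2}-\int_\Omega u\lambda^{2}|\varphi|^{2q+2}+\int_\Omega\tfrac{u}{4}sc_g|\varphi|^{2},
\end{equation*}
which, after moving everything to one side and factoring $|\varphi|^2$, is exactly the claimed identity \eqref{eq}.

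The only subtle point, and the one I would be most careful with, is the cross term $\int\langle D_g\varphi,\nabla u\cdot\varphi\rangle$: a priori it is complex, and one has to invoke the skew-Hermitian property of Clifford multiplication together with the fact that the final identity is a real equation to justify discarding it. Everything else is a bookkeeping exercise in integration by parts that is clean because $u$ has compact support inside $\Omega$ and $\varphi$ is smooth where it is needed.
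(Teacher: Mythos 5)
Your proposal is correct and follows essentially the same route as the paper: multiply the Bochner identity by $u$, integrate, invoke Schr\"odinger--Lichnerowicz and Green's formula (with boundary terms killed by the compact support of $u$), substitute the equation and the Penrose splitting, and discard the cross term $\lambda\int|\varphi|^q\langle\varphi,\nabla u\cdot\varphi\rangle\,dv_g$ because it is purely imaginary while every other term is real. Your treatment of that cross term is, if anything, slightly more explicit than the paper's, which simply notes $\langle\varphi,\nabla u\cdot\varphi\rangle=-\overline{\langle\varphi,\nabla u\cdot\varphi\rangle}$ and concludes.
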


\begin{proof}
We assume a nontrivial function  $u\in C^2_0(\Omega)$. Applying the Bochner-type formula \eqref{Bochner} to $\varphi$, multiplying by $u$ and integrating we get
$$0=-\int_{\Omega} \frac{u}{2}\Delta(|\varphi|^2)dv_g+ \int_{\Omega} u|\nabla\varphi|^2dv_g-\int_{\Omega} u\langle\nabla^*\nabla\varphi,\varphi\rangle dv_g. $$

By the Schr\"odinger-Lichnerowicz formula it follows that
\begin{equation}\label{1}
0=-\int_{\Omega} \frac{u}{2}\Delta(|\varphi|^2)dv_g+ \int_{\Omega} u|\nabla\varphi|^2dv_g-\int_{\Omega} u\langle D_g^2\varphi,\varphi\rangle dv_g+\frac{1}{4}\int_{\Omega} u\; sc_g|\varphi|^2dv_g. 
\end{equation}

Using Green's formula for Dirac operator we have,
\begin{align*}
\int_{\Omega} u\langle D_g^2\varphi,\varphi\rangle dv_g&=\int_{\Omega}\langle D_g\varphi,D_g(u\varphi)\rangle dv_g+ \int_{\partial \Omega}u\langle  D_g\varphi,N\cdot\varphi\rangle dv^{n-1}_g\\
&=\int_{\Omega} \left(\langle D_g\varphi,uD_g\varphi\rangle +\langle D_g\varphi,\nabla u\cdot\varphi\rangle\right) dv_g+\int_{\partial \Omega}u\langle  D_g\varphi,N\cdot\varphi\rangle dv^{n-1}_g\\
&=\int_{\Omega} u\lambda^2|\varphi|^{2q}|\varphi|^2dv_g+\int_{\Omega}\lambda|\varphi|^q\langle\varphi,\nabla u\cdot\varphi\rangle dv_g+\int_{\partial \Omega}u\langle  D_g\varphi,N\cdot\varphi\rangle dv^{n-1}_g.
\end{align*}
In last equation we used that $\varphi$ is a solution of equation \eqref{Diracq}.

On the other hand,
\begin{align*}
\int_{\Omega}-\frac{u}{2}\Delta(|\varphi|^2)dv_g=\int_{\Omega}-\frac{\Delta u}{2}|\varphi|^2dv_g+
\int_{\partial \Omega}\left(\frac{|\varphi|^2}{2}\langle\nabla u,N\rangle-\frac{u}{2}\langle N,\nabla|\varphi|^2\rangle\right)dv_g.
\end{align*}

Since that $supp\;(u)\subset \Omega$, $u=0$ and $\nabla u=0$ on $\partial \Omega$.
Therefore all previous boundary terms vanish i.e.
\begin{align}
\int_{\Omega} u\langle D_g^2\varphi,\varphi\rangle dv_g&= \int_{\Omega} u\lambda^2|\varphi|^{2q}|\varphi|^2dv_g+\int_{\Omega}\lambda|\varphi|^q\langle\varphi,\nabla u\cdot\varphi\rangle dv_g \label{a} \\
\int_{\Omega}-\frac{u}{2}\Delta(|\varphi|^2)dv_g&=\int_{\Omega}-\frac{\Delta u}{2}|\varphi|^2dv_g\label{b}
\end{align}

Using the spliting \eqref{c} and  combining the relations \eqref{a};\eqref{b} into the equation \eqref{1} we obtain,
\begin{align*}
0&=-\int_{\Omega} \frac{u}{2}\Delta(|\varphi|^2)dv_g+ \int_{\Omega} u|\nabla\varphi|^2dv_g-\int_{\Omega} u\langle D_g^2\varphi,\varphi\rangle dv_g+\frac{1}{4}\int_{\Omega} u\; sc_g|\varphi|^2dv_g\\ 
&=-\int_{\Omega} \frac{\Delta u}{2}|\varphi|^2dv_g+ \int_{\Omega} u|P\varphi|^2dv_g+\int_{\Omega} \frac{u}{n}|D_g\varphi|^2dv_g+\frac{1}{4}\int_{\Omega} u\;sc_g|\varphi|^2dv_g\\
&-\int_{\Omega} u\lambda^2|\varphi|^{2q}|\varphi|^2dv_g-\int_{\Omega}\lambda|\varphi|^q\langle\varphi,\nabla u\cdot\varphi\rangle dv_g\\
&=\int_{\Omega} |\varphi|^2\left\lbrace\frac{-\Delta u}{2}+\frac{u}{4}sc_g+\left(\frac{u}{n}-u\right)\lambda^2|\varphi|^{2q}\right\rbrace dv_g\\
&+\int_{\Omega} u|P\varphi|^2dv_g -\int_{\Omega}\lambda|\varphi|^q\langle\varphi,\nabla u\cdot\varphi\rangle dv_g.
\end{align*}

Now note  that $\langle\varphi,\nabla u\cdot\varphi\rangle$ is a pure imaginary: $$\langle\varphi,\nabla u\cdot\varphi\rangle=-\langle\nabla u\cdot\varphi,\varphi\rangle=-\overline{\langle\varphi,\nabla u\cdot\varphi\rangle}.$$
Hence the identity \eqref{eq} follows.

\end{proof}

In the next, we will find conditions for which  the right-hand side in (\ref{eq}) is non-negative. For that purpose, we define $L(u):=-2\Delta u+sc_gu$ and $f(x):=4\left(1-\frac{1}{n}\right)|\varphi|^{2q}(x)$, where $\varphi$ is  (as in the previous Lemma 3.1) a solution of \eqref{Diracq} such that $\varphi\in C^{\infty}(S\Omega)$ and $\Omega\cap \varphi^{-1}(0)=\emptyset$.

We consider a smooth open bounded domain $\Omega'\subset\Omega\subset M$ and the weighted eigenvalue problem   \begin{equation}\label{linear}
Lu=\lambda^2f(x)u,\quad x\in \Omega',\end{equation}
with Dirichlet boundary condition $u=0$ on $\partial \Omega'$.
\begin{lemma}
There exists a $\lambda_1(f)>0$ such that
\begin{enumerate}
\item For $\lambda^2=\lambda_1(f)$ there is a positive solution of problem \eqref{linear}; 
\item for $\lambda^2<\lambda_1(f)$ there is a positive function $u_{\lambda}$ in $\Omega'$ such that $L(u_{\lambda})-\lambda^2f(x)u_{\lambda}\in C^{\infty}_+(\Omega')$ and $u_{\lambda}=0$ on $\partial\Omega'.$
\end{enumerate}
\end{lemma}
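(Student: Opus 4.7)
The plan is to treat the weighted Dirichlet problem variationally: define $\lambda_1(f)$ as the infimum of a Rayleigh quotient, show that the infimum is attained by a smooth strictly positive eigenfunction via standard compactness plus the strong maximum principle, and then observe that this very eigenfunction already supplies the positive function required in Part 2.

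For Part 1, I would set
$$\lambda_1(f) := \inf_{u \in H^1_0(\Omega')\setminus\{0\}} \frac{\int_{\Omega'}\bigl(2|\nabla u|^2 + sc_g\, u^2\bigr)\, dv_g}{\int_{\Omega'} f\, u^2\, dv_g}.$$
Since $sc_g > 0$, $\Omega'$ is bounded, and $\varphi$ has no zeros on the relatively compact $\overline{\Omega'}\subset\Omega$, the weight $f = 4(1-\tfrac{1}{n})|\varphi|^{2q}$ is bounded between two positive constants on $\overline{\Omega'}$; in particular the Rayleigh quotient is well-defined, and bounded below by a positive constant. A minimizing sequence $(u_k)$ normalized by $\int_{\Omega'} f\, u_k^2\, dv_g = 1$ is uniformly bounded in $H^1_0(\Omega')$ (upper bound on $f$ controls the $L^2$ norm, and coercivity of the numerator then controls $\|\nabla u_k\|_{L^2}$). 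By Rellich--Kondrachov one extracts a weakly $H^1_0$-convergent, strongly $L^2$-convergent limit $u_1$; weak lower semicontinuity of the numerator together with strong $L^2$ convergence of the denominator show that $u_1$ is a minimizer. The Euler--Lagrange equation reads $L u_1 = \lambda_1(f)\, f\, u_1$, and standard elliptic regularity upgrades $u_1$ to $C^\infty(\overline{\Omega'})$. Since $|u_1|$ has the same Rayleigh quotient, one may assume $u_1 \geq 0$; rewriting the equation as $-2\Delta u_1 + (sc_g - \lambda_1(f)\, f)\, u_1 = 0$ with bounded zeroth-order coefficient, the strong maximum principle forces $u_1 > 0$ in $\Omega'$.

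For Part 2 I simply take $u_\lambda := u_1$. Then $u_\lambda > 0$ in $\Omega'$, $u_\lambda = 0$ on $\partial\Omega'$, and
$$L(u_\lambda) - \lambda^2 f(x)\, u_\lambda = \bigl(\lambda_1(f) - \lambda^2\bigr)\, f\, u_1,$$
which belongs to $C^\infty_+(\Omega')$ because $\lambda^2 < \lambda_1(f)$ and $f, u_1 > 0$ in $\Omega'$. The only mild obstacle in the whole argument is to keep $f$ uniformly bounded away from $0$ on $\overline{\Omega'}$, which is what makes the minimization coercive and the embedding compact; this uses the (implicit) assumption $\overline{\Omega'}\subset\Omega$ together with $\varphi$ being zero-free on $\Omega$. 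With that in hand, everything reduces to routine spectral theory for an elliptic operator with smooth positive coefficients on a smooth bounded domain.
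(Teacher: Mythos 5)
Your proof is correct, but it follows a genuinely different route from the paper's. The paper argues via Riesz--Fredholm theory: it introduces the Green operator $K$ of $L$ on $H^1_0(\Omega')$, forms the compact operator $T(u)=K(fu)$, extracts the eigenvalue sequence $\mu_i\to 0$, sets $\lambda_i(f,\Omega')=1/\mu_i$, and quotes the standard fact that the first eigenvalue is simple with a positive eigenfunction; for part (2) it solves the inhomogeneous problem $L u=\lambda^2 f u+h$ with $h>0$ and appeals to the weak maximum principle to get a positive $u_\lambda$. You instead define $\lambda_1(f)$ directly as the infimum of the Rayleigh quotient, obtain the positive first eigenfunction by the direct method together with the strong maximum principle, and for part (2) simply reuse $u_1$, observing $L u_1-\lambda^2 f u_1=(\lambda_1(f)-\lambda^2)\,f\,u_1>0$. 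Your handling of (2) is arguably cleaner: the paper's use of the weak maximum principle for $L-\lambda^2 f$, whose zeroth-order coefficient $sc_g-\lambda^2 f$ need not be nonnegative, is really a ``below the first eigenvalue'' maximum principle that itself needs justification, whereas your choice sidesteps it entirely (and also covers the borderline case $\lambda^2=\lambda_1(f)$ relevant to the remark following Theorem 3.3). Moreover, your variational definition of $\lambda_1(f)$ is exactly the characterization the paper invokes later in the proof of the main theorem, so nothing downstream is lost. Two small points: only the upper bound on $f$ over $\Omega'$ is actually needed (coercivity comes from $sc_g>0$, and the compactness of $H^1_0(\Omega')\hookrightarrow L^2(\Omega')$ is independent of $f$), so the positive lower bound is not what drives the argument; and your standing assumption $\overline{\Omega'}\subset\Omega$, ensuring $f$ is smooth and bounded on $\overline{\Omega'}$, is the same implicit hypothesis the paper uses when it asserts $f\in C^{\infty}(\Omega')$ and $fu\in H^1_0(\Omega')$.
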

\begin{proof}
We use the standard Fredholm theory.

Since $sc_g>0$ the operator $L$ is uniformly elliptic  and self-adjoint in $H^1_0(\Omega)$ (the closure of the space $C^{\infty}_0(\Omega')$ in $L^2_1(\Omega')$). Let $K:L^2(\Omega')\rightarrow H^1_0(\Omega')$ the Green operator associated with $L$, i.e. $K(h)=u,$ where $u$ is the unique weak solution of 
\begin{align*}
L(u)&=h\quad\text{in}\;  \Omega',\\  u&=0 \quad \text{on}\;\partial\Omega'.
\end{align*}

We note that, by Sobolev embedding and $L^p$ theory, this operator is compact as a map from $H^1_0(\Omega')$ into itself.

For all $u\in H^1_0(\Omega')$ we have that $fu\in H^1_0(\Omega')$ ($f\in C^{\infty}(\Omega')$). Therefore, we can define the compact linear operator $T:H^1_0(\Omega')\rightarrow H^1_0(\Omega')$ by $T(u)=K(fu)$.

The Riesz-Fredholm theory allows us to obtain a sequence $(\mu_i(f,\Omega'))$ of non-increasing values such that
\begin{itemize}
\item $\mu_i\rightarrow0$ as $i\rightarrow\infty$.
\item $Ker(\mu_iId-T)\neq\emptyset\quad\forall i=1,2,...$
\item $\mu Id-T$ is bijective for $\mu\neq\mu_i$.
\end{itemize}

Since $f\in C^{\infty}(\Omega')$ the eigenfunctions $u_i$ associated to eigenvalues $\lambda_i(f,\Omega')=1/\mu_ i(f,\Omega')$ are classical solutions of \eqref{linear}.

Moreover, the first eigenvalue $\lambda_1(f):=\lambda_1(f,\Omega')=\frac{1}{\mu_1(f,\Omega')}$ of the weighted problem (\ref{linear}) satisfies that is simple and its eigenspace is generated by a positive eigenfunction $u_1\in H_0^1(\Omega)$. Therefore (1) follows.

On the other hand we get (2) applying the weak maximum principle: For a positive smooth function $h$ on $M$ and $\lambda^2<\lambda_1(f)$
\begin{eqnarray*}
L(u)&=\lambda^2fu+h,\quad\text{in}\quad \Omega',\\
u&=0\hspace{1.9cm}\text{on}\quad\Omega'
\end{eqnarray*}
has a positive solution $u_{\lambda}$.
\end{proof}

From the identity in Lemma 3.1 and Lemma 3.2 we obtain some restrictions on solutions of the equation \eqref{Diracq}. Indeed

\begin{theorem}\label{keybound}
Let $\varphi$ be a spinor field solution of \eqref{Diracq}. Assume that $\varphi\in C^2(S\Omega)$ for some open domain $\Omega\subset M$ such that $\Omega\cap \varphi^{-1}(0)=\emptyset$. Let $\Omega'\subset\Omega$ be a smooth open domain. Then
$$\lambda^2\geq \lambda_1\left(\frac{4(n-1)}{n}|\varphi|^{2q},\Omega'\right),$$
where $\lambda_1\left(\frac{4(n-1)}{n}|\varphi|^{2q},\Omega'\right)$ is the first positive eigenvalue of problem \eqref{linear} with Dirichlet boundary condition.
\end{theorem}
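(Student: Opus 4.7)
The plan is to argue by contradiction. Suppose $\lambda^2 < \lambda_1 := \lambda_1\!\left(\frac{4(n-1)}{n}|\varphi|^{2q},\Omega'\right)$, and set $f := \frac{4(n-1)}{n}|\varphi|^{2q}$. Applying Lemma 3.2 (2) produces a positive function $u_\lambda \in C^{\infty}(\Omega')$ with $u_\lambda = 0$ on $\partial \Omega'$ such that
$$L u_\lambda - \lambda^2 f u_\lambda = h \in C^{\infty}_+(\Omega'),$$
i.e.\ the residual $h$ is smooth and strictly positive on $\Omega'$. The strategy is to plug $u_\lambda$ into the identity of Lemma 3.1 and read off a contradiction.

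Rearranging Lemma 3.1, for every $u \in C_0^2(\Omega)$ one has
$$\int_\Omega u\,|P\varphi|^2\,dv_g + \frac{1}{4}\int_\Omega |\varphi|^2 \bigl(Lu - \lambda^2 f u\bigr)\,dv_g = 0.$$
Formally substituting $u = u_\lambda$ extended by zero outside $\Omega'$, the left-hand side becomes
$$\int_{\Omega'} u_\lambda\,|P\varphi|^2\,dv_g + \frac{1}{4}\int_{\Omega'} |\varphi|^2\,h\,dv_g,$$
which is strictly positive ($|\varphi|^2$ and $h$ are positive on $\Omega'$ and $u_\lambda \geq 0$). This contradicts the identity being zero and thereby forces $\lambda^2 \geq \lambda_1$.

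The main obstacle will be the legitimacy of this substitution: since $\partial_N u_\lambda < 0$ on $\partial \Omega'$ by Hopf's lemma, the zero-extension of $u_\lambda$ is merely continuous across $\partial \Omega'$, while Lemma 3.1 demands $u \in C_0^2(\Omega)$. I plan to circumvent this by approximation: introduce a standard cut-off $\chi_\varepsilon \in C_c^{\infty}(\Omega')$ equal to $1$ outside an $\varepsilon$-collar of $\partial \Omega'$, apply Lemma 3.1 to $u_\lambda \chi_\varepsilon \in C_c^{\infty}(\Omega)$, and then send $\varepsilon \to 0$. The delicate step is that $L(u_\lambda \chi_\varepsilon)$ generates commutator terms involving $\nabla \chi_\varepsilon$ and $\Delta \chi_\varepsilon$, individually of size $O(\varepsilon^{-2})$ on the $\varepsilon$-collar; after pairing with $|\varphi|^2$ and using $u_\lambda|_{\partial \Omega'} = 0$ one must show that these commutators vanish (or combine with the correct sign) in the limit, so that the strictly positive bulk contribution $\frac{1}{4}\int_{\Omega'}|\varphi|^2 h\,dv_g$ is preserved and forces the contradiction. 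This boundary-layer limit is where most of the actual work lies.
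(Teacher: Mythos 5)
Your strategy is the same as the paper's (contradiction via Lemma 3.2(2), then feeding the zero--extension of $u_\lambda$ into the identity of Lemma 3.1), and you have correctly isolated the step that needs justification: by Hopf's lemma $\partial_N u_\lambda<0$ on $\partial\Omega'$, so the extension is only Lipschitz and is not an admissible $C^2_0(\Omega)$ test function. The trouble is that the step you defer --- showing that the cut-off commutator terms ``vanish (or combine with the correct sign)'' --- is exactly where the argument breaks, so the proposal is not merely incomplete: the deferred limit comes out with the wrong sign. One sees this most quickly by rerunning the proof of Lemma 3.1 on $\Omega'$ with $u_\lambda$ itself (smooth up to $\partial\Omega'$, vanishing there): the Dirac boundary term and the term $\frac{u}{2}\langle N,\nabla|\varphi|^2\rangle$ disappear because $u_\lambda=0$ on $\partial\Omega'$, but the term $\int_{\partial\Omega'}\frac{|\varphi|^2}{2}\langle\nabla u_\lambda,N\rangle\,dv_g^{n-1}$ survives. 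Your cut-off scheme gives the same answer: with $\chi_\varepsilon=\eta(d/\varepsilon)$, the terms in $\nabla\chi_\varepsilon$ and $\Delta\chi_\varepsilon$ are $O(1)$, not $o(1)$ (the $\Delta\chi_\varepsilon$ piece can be integrated by parts, and the remaining $\langle\nabla u_\lambda,\nabla\chi_\varepsilon\rangle$ piece concentrates on the boundary by the coarea formula), and the limiting identity is
\begin{equation*}
0=\frac14\int_{\Omega'}|\varphi|^2\bigl(Lu_\lambda-\lambda^2 f u_\lambda\bigr)\,dv_g
+\int_{\Omega'}u_\lambda|P\varphi|^2\,dv_g
+\frac12\int_{\partial\Omega'}|\varphi|^2\,\partial_N u_\lambda\,dv_g^{n-1},
\end{equation*}
whose last term is strictly negative by Hopf's lemma. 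The strictly positive bulk term is therefore offset by the boundary term, and no contradiction with $\lambda^2<\lambda_1$ follows.

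Moreover this is not a repairable technicality of the approximation. Take the round sphere with a Killing spinor $\varphi$: then $|\varphi|\equiv c$, $P\varphi=0$, $D_g\varphi=\frac n2\varphi=\lambda|\varphi|^{q}\varphi$, and pointwise $sc_g=\lambda^2 f$, so the displayed identity reduces to the divergence theorem and holds for every admissible $u_\lambda$; on the other hand, for a small geodesic ball $\Omega'=B_\epsilon$ the weighted Dirichlet eigenvalue $\lambda_1(f,B_\epsilon)$ blows up like $\epsilon^{-2}$ and exceeds the fixed $\lambda^2$. So the inequality cannot be extracted from Lemma 3.1 for an arbitrary subdomain $\Omega'$ by this route: the negative Hopf term is a genuine feature of the identity, not an artifact of the cut-off (note that the paper's own two-line substitution of the zero--extension suppresses exactly this term; the argument is clean only when $\partial\Omega'=\emptyset$, where no boundary contribution arises). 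In short, you have put your finger on the right delicate point, but the boundary-layer limit you postpone does not close the gap, and the contradiction argument as proposed fails.
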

\begin{proof}
Suppose $\lambda^2<\lambda_1\left(\frac{4(n-1)}{n}|\varphi|^{2q},\Omega'\right)$. By (2) in Lemma 3.2 we have a function $u_{\lambda}\in C^2_+(\Omega')$ such that $L(u_{\lambda})-\lambda^2fu_{\lambda}(\in C^{\infty}(\Omega'))$ is strictly positive in $\Omega'$ and $u_{\lambda}=0$ on $\partial\Omega'$.

We define $u_{\lambda}=0$ in $\Omega-\Omega'$.

From identity in Lemma 3.1,
\begin{align*}
0&=\int_{\Omega} |\varphi|^2\left\lbrace\frac{-\Delta u_{\lambda}}{2}+\frac{u_{\lambda}}{4}sc_g+\left(\frac{u_{\lambda}}{n}-u_{\lambda}\right)\lambda^2|\varphi|^{2q}\right\rbrace dv_g + \int_{\Omega} u_{\lambda}|P\varphi|^2dv_g\\
0&=\int_{\Omega}|\varphi|^2\{L(u_{\lambda})-\lambda^2fu_{\lambda}\}dv_g+\int_{\Omega}4u_{\lambda}|P\varphi|^2dv_g\\
0&=\int_{\Omega'}|\varphi|^2\{L(u_{\lambda})-\lambda^2fu_{\lambda}\}dv_g+\int_{\Omega'}4u_{\lambda}|P\varphi|^2dv_g.
\end{align*} 
Hence $\varphi=0$ in $\Omega'$ and this is a contradiction.

\end{proof}

\begin{remark}
Similarly to the proof of the previous Theorem,  the case $\lambda^2=\lambda_1\left(\frac{4(n-1)|\varphi|^{2q}}{n},\Omega'\right)$ implies that $P\varphi=0$ in $\Omega'$.

We highlight that the equation $P\varphi=0$ is completely understood when $\Omega'=M$. More precisely
\begin{itemize}
\item (Lichnerowicz,\cite{Lich})($n\geq2 $) If the zero set of $\varphi$ is non-empty, then $(M^n,g,\sigma)$ is conformally equivalent to $(\mathbb{S}^n,g_0,\sigma_0)$.
\end{itemize}

Moreover, by classification of manifolds with Killing spinors and (\cite{Baum})
\begin{itemize}
\item ($n\geq3$) If the zero set of $\varphi$ is empty, then $(M^n,g)$ is conformal to an Einstein manifold.
\end{itemize}

\end{remark}

\section{Explicit lower bound}

In this section we will prove the Theorem \eqref{mainTeo}. To give so we shall see explicit lower bounds for $\lambda_1\left(\frac{4(n-1)}{n}|\varphi|^{4/(n-1)},\Omega'\right)$ in a particular family of manifolds.

\begin{proof}[Proof of Theorem \eqref{mainTeo}]
First, we recall a well-known result that will be useful in the proof.
\begin{lemma}\label{SY}[Proposition 2.2, \cite{Schoen}]
Let $(M^n,g)$ be a Riemannian manifold, $n\geq 3$. Suppose there is a domain $\Omega\subset M$ such that there exists a conformal immersion from $(\Omega,g)$ into $(\mathbb{S}^n,g_0)$. For all $u\in C^{\infty}(M)$ with $supp(u)\subset\Omega$,
$$\left(\int_{M}u^{2n/(n-2)}dv_g\right)^{(n-2)/n}\leq \dfrac{4}{n(n-2)\omega_n^{2/n}}\left(\int_M|\nabla u|^2dv_g+\dfrac{n-2}{4(n-1)}\int_Msc_gu^2dv_g\right),$$
where $\omega_n$ is the volume of the round sphere $(\mathbb{S}^n,g_0)$.
\end{lemma}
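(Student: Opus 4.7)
The plan is to use the conformal covariance of the Yamabe (conformal Laplace) functional to move the inequality, via the given immersion $\phi$, onto the round sphere $(\mathbb{S}^n,g_0)$, where the sharp Sobolev inequality is classical (equivalent, via stereographic projection, to the Euclidean Aubin--Talenti inequality, which fixes the constant $\tfrac{4}{n(n-2)\omega_n^{2/n}}$). I may assume $u\geq 0$ by replacing $u$ with $|u|$. Conformal covariance of the Yamabe operator $L_g:=-\tfrac{4(n-1)}{n-2}\Delta_g+\mathrm{sc}_g$, namely $L_g(\psi v)=\psi^{(n+2)/(n-2)}L_{g'}(v)$ for $g'=\psi^{4/(n-2)}g$, yields upon multiplying by $\psi v$ and integrating
\begin{equation*}
\int_M\bigl(|\nabla u|^2_g+\tfrac{n-2}{4(n-1)}\mathrm{sc}_g\,u^2\bigr)dv_g=\int_M\bigl(|\nabla v|^2_{g'}+\tfrac{n-2}{4(n-1)}\mathrm{sc}_{g'}\,v^2\bigr)dv_{g'}
\end{equation*}
for $u=\psi v$, as well as $\int u^{2n/(n-2)}dv_g=\int v^{2n/(n-2)}dv_{g'}$. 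Writing the conformal immersion as $\phi^*g_0=\psi^{4/(n-2)}g$ on $\Omega$ and taking $g'=\phi^*g_0$, the desired inequality for $(u,g)$ is equivalent to the analogous inequality for $(v,\phi^*g_0)$; in this latter metric $\phi$ becomes a local isometry, so $\mathrm{sc}_{\phi^*g_0}\equiv n(n-1)$ on $\Omega$.

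Now I would push $v\geq 0$ forward with multiplicity in $L^p$, $p=2n/(n-2)$, by setting
\begin{equation*}
V(q):=\Bigl(\sum_{x\in\phi^{-1}(q)}v(x)^p\Bigr)^{1/p},\qquad q\in\mathbb{S}^n,
\end{equation*}
a finite sum since $\mathrm{supp}(v)$ is compact and $\phi$ is locally diffeomorphic. The change-of-variables formula for local isometries gives $\int_{\mathbb{S}^n}V^p\,dv_{g_0}=\int_\Omega v^p\,dv_{\phi^*g_0}$. The pointwise bound $V^2=(\sum v(x)^p)^{2/p}\leq\sum v(x)^2$ (valid since $t\mapsto t^{2/p}$ is subadditive for $p\geq 2$) yields $\int V^2\,dv_{g_0}\leq\int v^2\,dv_{\phi^*g_0}$. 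For the gradient, writing $v_i=v\circ\phi_i^{-1}$ for the local inverses of $\phi$ near $q$, the identity $V^{p-1}\nabla V=\sum_i v_i^{p-1}\nabla v_i$, combined with the triangle inequality, Cauchy--Schwarz, and the super-additivity bound $\sum a_i^r\leq(\sum a_i)^r$ for $r=2-2/p\geq 1$ applied with $a_i=v_i^p$, produces the pointwise estimate $|\nabla V|_{g_0}^2\leq\sum_i|\nabla v_i|_{g_0}^2$; integrating and using the same change-of-variables formula gives $\int|\nabla V|_{g_0}^2\,dv_{g_0}\leq\int|\nabla v|^2_{\phi^*g_0}\,dv_{\phi^*g_0}$.

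Finally, the sharp Sobolev inequality on $(\mathbb{S}^n,g_0)$,
\begin{equation*}
\Bigl(\int_{\mathbb{S}^n}V^p\,dv_{g_0}\Bigr)^{2/p}\leq\frac{4}{n(n-2)\omega_n^{2/n}}\int_{\mathbb{S}^n}\bigl(|\nabla V|_{g_0}^2+\tfrac{n(n-2)}{4}V^2\bigr)dv_{g_0},
\end{equation*}
applied to $V$, together with the three pushforward comparisons from the previous paragraph, yields the desired inequality for the metric $\phi^*g_0$; the conformal reduction in the first paragraph then delivers it in the original metric $g$. The main obstacle is the $L^p$-pushforward step: preserving the sharp constant under an immersion that may have multiple preimages forces one to transfer $v$ to $\mathbb{S}^n$ in exactly this $\ell^p$ manner, and the pointwise gradient bound — whose proof combines Cauchy--Schwarz with the super-additivity inequality $\sum a_i^r\leq(\sum a_i)^r$ for $r\geq 1$ — is what keeps the sharp constant intact. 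A secondary technicality is that $V$ is only locally Lipschitz across loci where the preimage count jumps, but the pointwise gradient estimate holds almost everywhere, which suffices for the $H^1$-level integral comparison.
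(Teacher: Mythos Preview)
The paper does not give its own proof of this lemma: it is stated with a citation to Schoen--Yau (Proposition~2.2 of \cite{Schoen}) and used as a black box inside the proof of Theorem~\ref{mainTeo}. So there is no paper argument to compare against; what you have written is, in outline, precisely the Schoen--Yau proof.

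Your argument is correct. The two structural ingredients --- conformal covariance of the Yamabe energy to replace $g$ by $\phi^*g_0$, and the sharp Sobolev inequality on $(\mathbb{S}^n,g_0)$ --- are exactly the right ones, and the $\ell^p$-pushforward $V=(\sum_{x\in\phi^{-1}(q)}v(x)^p)^{1/p}$ is the device that handles the possible non-injectivity of the immersion while preserving the sharp constant. Your verification of the three comparison inequalities is sound: equality for the $L^p$ norm by change of variables, the $L^2$ bound from $\|\cdot\|_{\ell^p}\leq\|\cdot\|_{\ell^2}$, and the gradient bound from $V^{p-1}\nabla V=\sum v_i^{p-1}\nabla v_i$ together with Cauchy--Schwarz and the super-additivity $(\sum a_i)^r\geq\sum a_i^r$ for $r=2-2/p\geq 1$. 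The regularity caveat you flag is genuine but harmless: $V$ is locally Lipschitz, the pointwise gradient estimate holds almost everywhere, and the sharp Sobolev inequality on $\mathbb{S}^n$ holds for $H^1$ functions, so the integral comparison goes through. One small remark: after replacing $u$ by $|u|$ you lose smoothness of $u$ itself, but this is immaterial since the identities you use only require $u\in H^1$ with compact support, and $|\nabla|u||=|\nabla u|$ a.e.
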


Let $\Omega\subset M$ be an open bounded domain such that $(\Omega,g)$ is conformally flat. 
We consider a solution $\varphi$ of the equation \ref{Dirac} with $\int_{M}|\varphi|^{\frac{2n}{n-1}}dv_g=1$.

From the variational characterization of eigenvalues $$\dfrac{1}{\lambda_1\left(\frac{4(n-1)}{n}|\varphi|^{4/(n-1)},\Omega\right)}=\frac{4(n-1)}{n}
\sup\left\lbrace\int_{\Omega}|\varphi|^{4/(n-1)}u^2dv_g/\int_{\Omega} uLudv_g=1\right\rbrace.$$
Using H\"older inequality we get that,
$$\int_{\Omega}|\varphi|^{4/(n-1)}u^2dv_g\leq\left(\int_{\Omega}|\varphi|^{2n/(n-1)}dv_g\right)^{2/n}\left(\int_{\Omega}u^{2n/(n-2)}dv_g\right)^{(n-2)/n}.$$
The normalization of $\varphi$ implies that $$\dfrac{1}{\lambda_1\left(\frac{4(n-1)}{n}|\varphi|^{4/(n-1)},\Omega\right)}\leq\frac{4(n-1)}{n}
\sup\left\lbrace\left(\int_{\Omega}u^{2n/(n-2)}dv_g\right)^{(n-2)/n}/\int_{\Omega} uLu dv_g=1\right\rbrace.$$
Hence
$$\frac{n}{4(n-1)}\inf\left\lbrace\dfrac{1}{\left(\int_{\Omega}u^{2n/(n-2)}dv_g\right)^{(n-2)/n}}\quad/ \int_{\Omega} uLudv_g=1\right\rbrace\leq \lambda_1\left(\frac{4(n-1)}{n}|\varphi|^{4/(n-1)},\Omega\right).$$

On the other hand, 
\begin{align*}
\inf\left\lbrace\dfrac{1}{\left(\int_{\Omega}u^{2n/(n-2)}dv_g\right)^{(n-2)/n}}\;/ \int_{\Omega} uLudv_g=1\right\rbrace&=\inf\limits_{supp(u)\subset\Omega}\dfrac{\int_{\Omega} uLudv_g}{\left(\int_{\Omega} u^{2n/(n-2)}dv_g\right)^{(n-2)/n}}\\
&=2\inf\limits_{supp(u)\subset\Omega}\dfrac{\int_{\Omega} |\nabla u|^2dv_g+\frac{1}{2}\int_{\Omega} sc_gu^2dv_g}{\left(\int_{\Omega} u^{2n/(n-2)}dv_g\right)^{(n-2)/n}}.
\end{align*}
Since that $\frac{n-2}{4(n-1)}<1/2$ we get that $$\lambda_1\left(\frac{4(n-1)}{n}|\varphi|^{4/(n-1)},\Omega\right)\geq \frac{2n}{4(n-1)}\inf\limits_{supp(u)\subset\Omega}\dfrac{\int_{\Omega} |\nabla u|^2dv_g+\frac{ n-2}{4(n-1)}\int_{\Omega}sc_gu^2dv_g}{\left(\int_{\Omega} u^{2n/(n-2)}dv_g\right)^{(n-2)/n}}.$$

Without loss of generality we can suppose that $\Omega$ is simply connected. Since $(\Omega,g)$ is conformally flat there exists a conformal immersion from $(\Omega,g)$ into $(\mathbb{S}^n,g_0)$. Then by Lemma 4.1 $$\lambda_1\left(\frac{4(n-1)}{n}|\varphi|^{4/(n-1)},\Omega\right)\geq \dfrac{n}{4(n-1)}\dfrac{n(n-2)\omega_n^{2/n}}{2}=\dfrac{n^2}{8}\dfrac{(n-2)}{(n-1)}\omega_n^{2/n}.$$

Moreover by weak unique continuation property  \cite[Corollary 4.5.2]{AmH}  we can assume that the nontrivial solution $\varphi$ does not have zero in $\Omega$. Therefore Theorem \eqref{keybound} implies $$\lambda^2\geq \dfrac{n^2}{8}\dfrac{(n-2)}{(n-1)}\omega_n^{2/n} .$$
\end{proof}


\begin{thebibliography}{15}

\bibitem{Amlow} B. Ammann, {\it A spin-conformal lower bound of the first positive Dirac eigenvalue}, Differ. Geom. Appl. 18:1 (2003), 21-32.

\bibitem{AmH} B. Ammann,{\it A variational problem in conformal spin geometry}, Habilitationsschrift, Universit\"at Hamburg (2003).

\bibitem{AHM} B. Ammann, E. Humbert, B. Morel,{\it Mass endomorphism and spinorial Yamabe type
problems on conformally flat manifolds}, Comm. Anal. Geom. 14:1 (2006), 163-182.

\bibitem{AmAubin} B. Ammann, J.-F. Grosjean, E. Humbert, B. Morel, {\it A spinorial analogue of Aubin's inequality}, Math. Z. 260 (2008) 127–151.

\bibitem{Baum} H. Baum, T. Friedrich, R. Grunewald, I. Kath,{\it Twistor and Killing spinors on Riemannian manifolds}, Teubner-Texte zur Mathematik, Band 124, Teubner-Verlag Stuttgart/Leipzig 1991.

\bibitem{HijaziBook}J.-P. Bourguignon, O. Hijazi, J.-L. Milhorat, A. Moroianu, S. Moroianu {\it A Spinorial Approach to Riemannian and Conformal Geometry}. EMS Monographs in Mathematics (2015).


\bibitem{Friedrich} T. Friedrich,{\it On the spinor representation of surfaces in Euclidean 3-space}, J. Geom. Phys. 28 (1998) 143–157.

\bibitem{Hijazi} O. Hijazi,{\it A conformal lower bound for the smallest eigenvalue of the Dirac operator and Killing spinors}, Comm. Math. Phys. 104 (1986), 151-162.

\bibitem{IsobeSphere} T. Isobe, T. Xu, {\it Solutions of Spinorial Yamabe-type Problems on $S^m$ : Perturbations and Applications}, Trans. Amer. Math. Soc. 376 (2023), 6397-6446.


\bibitem{Lich} A. Lichnerowicz, {\it Killing spinors, twistor-spinors and Hijazi inequality}, J. Geom. Phys. 5 (1988), 2–18.

\bibitem{Lott} J. Lott, {\it Eigenvalue bounds for the Dirac operator}, Pac. J. Math. 125:1 (1986), 117-126.

\bibitem{Schoen} R. Schoen, S-T Yau, {\it Conformally flat manifolds, Kleinian groups and scalar curvature}, Invent. Math. 92 (1988), 47–71, MR 0954421

\bibitem{Sire} Y. Sire, T. Xu,  {\it On the B\"ar-Hijazi-Lott invariant for the Dirac operator and a spinorial proof of the Yamabe problem}. ArXiv e-prints(2021).

\end{thebibliography}
\end{document}